\def \hdim {\operatorname{dim}}
\def \bR{\mathbb{R}}
\newtheorem{Th}{Theorem}[section]
\newtheorem{Thm}[Th]{Theorem}
\newtheorem{Lem}[Th]{Lemma}
\newtheorem*{Lem*}{Lemma}
\begin{document}
\title{Isometric embeddability of Snowflakes}

\author{Erik Walsberg}
\address{Institute of Mathematics\\
the Hebrew University of Jerusalem\\
Givat Ram,
Jerusalem, 9190\\
 Israel}
\email{erikw@math.ucla.edu}
\date{\today}
\maketitle
\begin{center}
\textit{To Gabor Elek.}
\end{center}
\begin{abstract}
We show that a snowflake of a metric space with positive Hausdorff dimension does not admit an isometric embedding into euclidean space.
\end{abstract}

\section{Introduction}
Let $(X,d)$ be a metric space.
By euclidean space we mean $\bR^k$ equipped with the standard euclidean metric.
Given $\lambda > 0$ a map $f: X \rightarrow \bR^k$ gives a $\lambda$-bilipschitz embedding of $(X,d)$ into euclidean space if:
$$ \frac{1}{\lambda} d(x,x') \leq \| f(x) - f(x') \| \leq \lambda d(x,x') \quad \text{for all } x,x' \in X. $$
The map $f$ is a bilipschitz embedding if it is a $\lambda$-bilipschitz embedding for some $\lambda > 0$.
Bilipschitz embeddings are injective, so the term ``embedding'' is justified.
The map $f$ is an isometric embedding if it is a $1$-bilipschitz embedding.
Given $0 < r < 1$, the $r$-snowflake of $(X,d)$ is the metric space $(X,d^r)$.
We say that $(X,d^r)$ is a snowflake of $(X,d)$.
Let $K > 0$, we say that $(X,d)$ is $K$-doubling if every open ball of radius $t$ contains at most $K$ pairwise disjoint open balls of radius $\frac{1}{2}t$.
The metric space $(X,d)$ is said to be doubling if it is $K$-doubling for some $K > 0$.
It is easy to see that the following facts hold:
\begin{itemize}
\item Euclidean space is doubling.
\item Any metric space which admits a bilipschitz embedding into euclidean space is doubling.
\item A snowflake of a doubling metric space is doubling.
\end{itemize}
The following marvelous theorem, due to Assouad, gives a kind of converse to the simple facts listed above:
\begin{Thm}[Assouad]
Suppose that $(X,d)$ is doubling and $0 < r < 1$.
Then the $r$-snowflake of $(X,d)$ admits a bilipschitz embedding into some euclidean space.
\end{Thm}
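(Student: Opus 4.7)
The plan is to build an explicit countable family of Lipschitz functions on $X$ indexed by (scale, colour) pairs, assemble them as coordinates of a map into a Hilbert space so that the squared $\ell^2$ norm of a coordinate difference is comparable to $d(x,x')^{2r}$, and then collapse to a finite-dimensional euclidean target by partitioning the scales into a bounded number of residue classes.

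First I would fix for each $n \in \mathbb{Z}$ a maximal $2^{-n}$-separated subset $N_n \subseteq X$. Iterating the doubling hypothesis yields a uniform integer $M = M(K)$ such that every ball of radius $4 \cdot 2^{-n}$ contains at most $M$ points of $N_n$, and a greedy argument then produces a colouring $\chi_n : N_n \to \{1,\dots,M\}$ with any two same-colour points at distance at least $4 \cdot 2^{-n}$. For each pair $(n,c)$ I take $\phi_{n,c}: X \to [0, 2^{-n}]$ to be a $1$-Lipschitz bump with respect to $d$ that equals $2^{-n}$ on each closed ball of radius $2^{-n}$ about a colour-$c$ point of $N_n$ and vanishes outside the concentric ball of radius $2 \cdot 2^{-n}$; the separation built into $\chi_n$ guarantees the bumps within a fixed colour do not interfere.

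The tentative map is $F(x) = \bigl( 2^{n(1-r)}\, \phi_{n,c}(x) \cdot e_{n,c}\bigr)_{n \in \mathbb{Z},\, c \le M}$ into $\ell^2$ (subtracting a basepoint to ensure well-definedness at negative scales). For $x,x' \in X$ with $d(x,x') \in [2^{-N-1}, 2^{-N}]$ one has the pointwise bound
\[
\bigl|\,2^{n(1-r)}\phi_{n,c}(x) - 2^{n(1-r)}\phi_{n,c}(x')\,\bigr| \;\le\; \min\bigl(2^{-nr},\; 2^{n(1-r)} d(x,x')\bigr),
\]
and at most $M$ summands are nonzero at each scale. The upper bound $\|F(x) - F(x')\| \lesssim d(x,x')^r$ then reduces to summing two geometric series, one over $n \ge N$ with ratio $2^{-2r}$ and one over $n < N$ with ratio $2^{-2(1-r)}$; both ratios are $<1$ precisely because $0 < r < 1$. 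The matching lower bound comes from the critical scale $n = N$: some $p \in N_N$ satisfies $d(x,p) < 2^{-N}$ while $d(x',p) \ge 2\cdot 2^{-N}$, producing a single coordinate with difference $\gtrsim 2^{-Nr} \sim d(x,x')^r$.

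The remaining and most delicate step is descending from $\ell^2$ to a finite-dimensional target. I would pick an integer $L = L(r,K)$ large enough that $2^{-L\min(r,\,1-r)}$ is a small fraction of $1$, and then define the finite collection of coordinates
\[
\Psi_{j,c}(x) \;=\; \sum_{n \equiv j \,(\mathrm{mod}\, L)} 2^{n(1-r)}\, \phi_{n,c}(x), \qquad j \in \{0,\dots,L-1\},\; c \in \{1,\dots,M\},
\]
giving a map into $\bR^{LM}$. The upper bound transfers immediately by the triangle inequality. The real obstacle, and the technical heart of the proof, is verifying that the critical-scale term still dominates $|\Psi_{j,c}(x) - \Psi_{j,c}(x')|$ after collapsing: one must show that the sum of contributions from $n = N \pm L, N \pm 2L,\dots$ in the same residue class is, by the same geometric decay estimates, at most a small fraction of the critical-scale contribution. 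Choosing $L$ sufficiently large as a function of $r$ and the doubling constant makes this cross-term negligible and preserves the bilipschitz estimate, completing the embedding into $\bR^{LM}$.
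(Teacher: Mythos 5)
The paper does not prove this theorem: it is stated as a known result of Assouad and used as motivation only, so there is no in-paper argument to compare against. Your proposal is a faithful sketch of the standard modern proof (nets at dyadic scales, a bounded colouring from the doubling condition, weighted Lipschitz bumps summed in $\ell^2$, then collapse to finitely many coordinates by residue classes of scales), and the architecture is sound: the two geometric series with ratios controlled by $r$ and $1-r$, the single dominant critical-scale coordinate, and the choice of $L$ large enough that same-residue-class cross-terms are a small fraction of the critical term are exactly the right ingredients, and the parenthetical basepoint subtraction correctly addresses the divergence of the coarse-scale coordinates.

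One step, as literally written, fails and needs the standard repair. You take $d(x,x') \in [2^{-N-1}, 2^{-N}]$ and claim a net point $p \in N_N$ with $d(x,p) < 2^{-N}$ and $d(x',p) \ge 2 \cdot 2^{-N}$; but then the triangle inequality gives $d(x',p) \le d(x',x) + d(x,p) < 2^{-N} + 2^{-N}$, so no such $p$ exists. The lower bound must instead be extracted at a slightly finer critical scale $n = N + c_0$ for a small absolute constant $c_0$ (e.g.\ $c_0 = 3$): maximality of $N_{n}$ gives $p$ with $d(x,p) \le 2^{-n}$, hence $\phi_{n,c}(x) = 2^{-n}$, while $d(x',p) \ge 2^{-N-1} - 2^{-n} \ge 2\cdot 2^{-n}$ forces $\phi_{n,c}(x') = 0$, yielding a coordinate difference $2^{-nr} \gtrsim 2^{-Nr}$. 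This shifts the critical term only by a constant factor and does not disturb the comparison with the tails, so the rest of your argument, including the mod-$L$ collapse, goes through unchanged. With that correction the proposal is a correct outline of the theorem's standard proof.
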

It is natural to wonder when snowflakes admit isometric embeddings into euclidean space.
In this paper we show that this is generally not the case:
\begin{Thm}\label{Thm:it}
Suppose that $(X,d)$ has positive Hausdorff dimension and $0 < r < 1$.
Then the $r$-snowflake of $(X,d)$ does not admit an isometric embedding into euclidean space.
\end{Thm}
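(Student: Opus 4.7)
The plan is to assume an isometric embedding $f : (X, d^r) \to \mathbb{R}^k$ exists and derive a contradiction by exhibiting, near a non-isolated point of $X$, infinitely many near-orthogonal unit vectors in $\mathbb{R}^k$.

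First, I would reduce to the existence of a non-isolated point. Because $f$ is injective and $\mathbb{R}^k$ is separable, $(X, d^r)$, and hence $(X, d)$, is separable. A separable metric space of positive Hausdorff dimension is uncountable (countable sets have Hausdorff dimension zero), and isolated points of a separable metric space form a countable set, so some $z \in X$ is non-isolated. Pick a sequence $x_n \in X \setminus \{z\}$ with $r_n := d(x_n, z)$ decreasing so rapidly that $r_{n+1} \leq c \cdot r_n$ for a small constant $c \in (0,1)$ to be fixed later.

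Set
\[
u_n := \frac{f(x_n) - f(z)}{r_n^r} \in \mathbb{R}^k;
\]
each $u_n$ is a unit vector because $\|f(x_n) - f(z)\| = d(x_n, z)^r = r_n^r$. The key computation, coming from $\|f(x_n) - f(x_m)\|^2 = d(x_n, x_m)^{2r}$, is
\[
u_n \cdot u_m \;=\; \frac{r_n^{2r} + r_m^{2r} - d(x_n, x_m)^{2r}}{2\, r_n^r\, r_m^r}.
\]
For $n < m$ the triangle inequality in $(X,d)$ gives $d(x_n, x_m) = r_n(1 + s)$ for some $s$ with $|s| \leq r_m / r_n$. Expanding $(1+s)^{2r}$ to first order and exploiting $0 < r < 1$ yields an estimate of the shape $|u_n \cdot u_m| \leq C_r \, (r_m/r_n)^{\min(r,\, 1-r)}$, which is uniformly at most $C_r\, c^{\min(r,\, 1-r)}$. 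By choosing $c$ small enough we force every pairwise inner product below, say, $1/\sqrt{2k}$.

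A standard rank-plus-trace estimate on the Gram matrix shows that $\mathbb{R}^k$ admits at most $2k$ unit vectors whose pairwise inner products are all bounded by $1/\sqrt{2k}$ in absolute value; our countably infinite collection $\{u_n\}$ therefore yields the desired contradiction. I expect the main obstacle to be the inner-product estimate: the underlying heuristic is that snowflaking inflates small distances relative to large ones, so once $r_m \ll r_n$ the image $f(x_m)$ is forced off the line through $f(z)$ and $f(x_n)$ in a direction nearly perpendicular to $f(x_n) - f(z)$; making this quantitative is where the condition $r < 1$ enters essentially. The hypothesis of positive Hausdorff dimension is used only to guarantee a non-isolated point, so this method in fact proves a formally stronger statement.
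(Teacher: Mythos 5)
Your proof is correct, but it takes a genuinely different route from the paper's. The paper argues globally and measure-theoretically: it fixes $l+1$ image points in general position, recovers the embedding from the distance coordinates $x \mapsto (d(x_i,x)^r)_{i}$, uses a semialgebraic cell decomposition to show that the inverse ``trilateration'' map $\tau$ is piecewise smooth and hence does not raise Hausdorff dimension, and concludes $\hdim(X,d^r) \leq \hdim(X,d)$, contradicting $\hdim(X,d^r) = \tfrac{1}{r}\hdim(X,d)$. Your argument is local and elementary: the polarization identity together with the first-order expansion of $(1+s)^{2r}$ gives $|u_n \cdot u_m| \leq 2r\,t^{1-r} + \tfrac12 t^{r}$ with $t = r_m/r_n$, which I have checked and which visibly degenerates exactly at $r=1$ (collinear configurations then give inner product $1$), and the rank--trace bound on the Gram matrix forbids infinitely many pairwise almost-orthogonal unit vectors in $\bR^k$. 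The trade-off: the paper's route avoids choosing a basepoint but leans on semialgebraic geometry and uses positive Hausdorff dimension essentially; yours uses only the triangle inequality and linear algebra, and, as you observe, proves the formally stronger statement that the snowflake of \emph{any} metric space with a non-isolated point (for instance a convergent sequence, which has Hausdorff dimension $0$) admits no isometric embedding into euclidean space --- the dimension hypothesis enters only via separability (forced by the embedding) to produce such a point.
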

We thank Enrico le Donne for useful correspondence on this topic and Samantha Xu for providing an excellent working environment.

\section{Preliminaries}
Our proof depends on some basic geometric facts which we gather in this section.
We begin with an elementary lemma:
\begin{Lem}\label{Lem:general}
Let $x_1,\ldots, x_{l+1} \in \bR^{l}$ be in general position.
Then the map $\sigma: \bR^{l} \rightarrow \bR^{l + 1}$ given by
$$ \sigma(y) = ( \| y - x_1 \|, \ldots, \|y - x_{l+1}\|) $$
is injective.
\end{Lem}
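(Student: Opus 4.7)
The plan is to prove injectivity by contradiction. Suppose $\sigma(y) = \sigma(y')$ for some $y \neq y'$. By definition this means
$$ \|y - x_i\| = \|y' - x_i\| \quad \text{for every } i = 1,\ldots,l+1,$$
so each $x_i$ is equidistant from $y$ and $y'$. I would then invoke the standard fact that the locus of points in $\bR^l$ equidistant from two distinct points $y,y'$ is the perpendicular bisector hyperplane of the segment $[y,y']$, which is an affine subspace of dimension $l-1$. Hence all $l+1$ points $x_1,\ldots,x_{l+1}$ lie on a single affine hyperplane.

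The finishing step is to observe that this contradicts the general position hypothesis. Here "general position" for $l+1$ points in $\bR^l$ should mean that they are affinely independent, equivalently that no affine hyperplane (dimension $l-1$) contains all of them; equivalently, the $l$ vectors $x_2 - x_1,\ldots, x_{l+1} - x_1$ are linearly independent. Since we just exhibited such a containing hyperplane, we reach a contradiction, forcing $y = y'$.

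There is no real obstacle here; the only thing to be careful about is to pin down which notion of "general position" is intended, since the paper does not define it. The proof above uses only affine independence of the $l+1$ points, which is the weakest sensible interpretation and is what the geometry of perpendicular bisectors naturally demands. If a stronger notion is intended (e.g., no $k+1$ of them lying in a $k-1$ dimensional affine subspace for every $k$) the argument still goes through a fortiori.
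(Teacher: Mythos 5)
Your proposal is correct and is essentially the same argument as the paper's: both deduce that all $l+1$ points lie on the perpendicular-bisector hyperplane of $y$ and $y'$, a codimension-one affine subspace, contradicting general position. Your added remarks pinning down the meaning of ``general position'' as affine independence are a reasonable clarification but do not change the argument.
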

\begin{proof}
We suppose otherwise towards a contradiction.
Suppose that $y,y' \in \bR^{l}$ are such that $y \neq y'$ and $\sigma(y) = \sigma(y')$.
Let $H$ be the set of $x \in \bR^{l}$ such that $\| x - y \| = \| x - y' \|$.
So $x_1,\ldots,x_{l+1} \in H$.
However, as $H$ is a hyperplane of codimension one, this implies that $x_1,\ldots, x_{l+1}$ are not in general position.
\end{proof}
We let $D \subseteq \bR^{l+1}$ be the image of $\sigma$ and let $\tau: D \rightarrow \bR^{l}$ be the compositional inverse of $\sigma$.
That is, if $\bar{t} = (t_1,\ldots,t_{l+1}) \in D$ then $\tau(\bar t)$ is the unique $y \in \bR^{l}$ such that:
$$ \| y - x_i \| = t_i \quad \text{for all } 1 \leq i \leq l+1. $$
We make use of the following:
\begin{Lem}\label{Lem:decomp}
There are smooth submanifolds $D_1,\ldots,D_m \subseteq \bR^{l+1}$ such that $D = D_1 \cup \ldots \cup D_m$ and the restriction of $\tau$ to each $D_i$ is smooth.
\end{Lem}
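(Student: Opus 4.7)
My plan is to reduce the claim to the cell decomposition theorem of semi-algebraic geometry. The key observation is that $\sigma$ is a semi-algebraic map, since each component $y \mapsto \|y - x_i\|$ is the non-negative square root of a polynomial. It follows that $D$ is a semi-algebraic subset of $\bR^{l+1}$, and that the inverse $\tau$ provided by Lemma \ref{Lem:general} is also a semi-algebraic map, as its graph is obtained from the graph of $\sigma$ by swapping coordinates.

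Given this, I would invoke the semi-algebraic cell decomposition theorem to obtain a finite partition of $D$ into cells $D_1, \ldots, D_m$, each of which is a smooth semi-algebraic submanifold of $\bR^{l+1}$. To additionally arrange that $\tau|_{D_i}$ is smooth for every $i$, I would demand that the decomposition be compatible with the graph of $\tau$; alternatively, one can argue inductively by observing that the semi-algebraic locus where $\tau$ fails to be smooth has strictly smaller dimension than $D$, so the smooth top stratum can be peeled off and the induction hypothesis applied to the residual set, a process which terminates after finitely many steps since dimensions are nonnegative integers.

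The one point requiring care is whether this machinery delivers genuinely $C^\infty$ regularity or only $C^p$ for each finite $p$, since some standard statements of cell decomposition produce only the latter. For the former one may appeal to Nash stratification, which decomposes any semi-algebraic set into finitely many real-analytic semi-algebraic submanifolds. In the present concrete setting this regularity is essentially transparent: off the finite set $\{x_1,\ldots,x_{l+1}\}$ the map $\sigma$ is real analytic, so in a neighborhood of any image point at which the Jacobian of $\sigma$ has full rank $l$ the inverse $\tau$ is real analytic by the analytic inverse function theorem, and the standard inductive decomposition peels off precisely these ``good'' loci on each iteration, so every piece inherits $C^\infty$ regularity automatically.
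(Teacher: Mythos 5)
Your proposal is correct and follows essentially the same route as the paper: both establish that $D$ and $\tau$ are semialgebraic and then invoke the standard decomposition of a semialgebraic set into finitely many smooth submanifolds compatible with a given semialgebraic map. Your additional care about $C^\infty$ versus $C^p$ regularity (via Nash stratification or analyticity of $\sigma$ away from the $x_i$) addresses a point the paper passes over silently, but it does not change the argument.
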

\begin{proof}
It is presumably easy to prove Lemma~\ref{Lem:decomp} in an elementary way.
However, the present author is a logician.
Therefore, we give a very general proof using semialgebraic geometry.
A set $A \subseteq \bR^k$ is semialgebraic if it is a finite union of sets of the form
$$ \{ \bar x \in \bR^k : p(\bar x) \geq 0\} \quad \text{ for polynomial $p$.} $$
A function $f: A \rightarrow B$ between semialgebraic subsets $A,B \subseteq \bR^k$ is semialgebraic if its graph is a semialgebraic subset of $\bR^k \times \bR^k$.
We refer to \cite{bcr} for information about semialgebraic geometry.
It is well known that every semialgebraic subset of euclidean space is a finite union of smooth submanifolds of euclidean space and if $A \subseteq \bR^k$ and $f: A \rightarrow \bR^n$ are semialgebraic then $A$ can be written as a finite union of smooth submanifolds of $\bR^k$ in such a way that the restriction of $f$ to every submanifold is smooth.
It is an immediate consequence of Tarski-Seidenberg quantifier elimination that $D \subseteq \bR^{l+1}$ and $\tau: D \rightarrow \bR^l$ are both semialgebraic.
Lemma~\ref{Lem:decomp} follows.
\end{proof}
\begin{Lem}\label{Lem:raise}
Let $A \subseteq D$.
The Hausdorff dimension of $\tau(A)$ is no greater then the Hausdorff dimension of $A$.
\end{Lem}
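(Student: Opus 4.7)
The plan is to combine the decomposition from Lemma~\ref{Lem:decomp} with the standard fact that Lipschitz maps do not increase Hausdorff dimension, together with countable stability of Hausdorff dimension. The point is that although $\tau$ may well fail to be Lipschitz on all of $D$ (the distance coordinates are not globally well-behaved), Lemma~\ref{Lem:decomp} lets us stratify $D$ into pieces on which $\tau$ is smooth, and on compact subsets of such pieces smoothness upgrades to Lipschitz.

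First I would use Lemma~\ref{Lem:decomp} to write $A = \bigcup_{i=1}^m (A \cap D_i)$, so that $\tau(A) = \bigcup_{i=1}^m \tau(A \cap D_i)$. Since the Hausdorff dimension of a finite (indeed countable) union is the supremum of the Hausdorff dimensions of the pieces, it suffices to prove $\hdim \tau(A \cap D_i) \leq \hdim(A \cap D_i)$ for each fixed $i$.

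Now fix such an $i$ and exhaust $D_i$ by compact sets, for example $K_n := D_i \cap \overline{B}(0,n)$, so $D_i = \bigcup_{n \in \N} K_n$. Because $\tau|_{D_i}$ is smooth and $K_n$ is compact, the restriction $\tau|_{K_n}$ has bounded derivative with respect to any local parametrization of $D_i$, and hence is Lipschitz from $K_n$ (with the restricted Euclidean metric from $\bR^{l+1}$) to $\bR^l$. Applying the classical fact that Lipschitz maps do not increase Hausdorff dimension to each restriction $\tau|_{A \cap D_i \cap K_n}$ yields $\hdim \tau(A \cap D_i \cap K_n) \leq \hdim(A \cap D_i \cap K_n) \leq \hdim(A \cap D_i)$. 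A second application of countable stability, this time to the countable union $\tau(A \cap D_i) = \bigcup_n \tau(A \cap D_i \cap K_n)$, gives the desired inequality.

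The only non-bookkeeping step is verifying that smoothness of $\tau|_{D_i}$ on the submanifold $D_i$ really does translate into a Lipschitz bound (with respect to the ambient Euclidean metric) on compact subsets; this is standard, but the cleanest justification goes via local coordinates on $D_i$ together with compactness.
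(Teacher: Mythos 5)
Your overall strategy is the same as the paper's: decompose $D$ via Lemma~\ref{Lem:decomp}, use finite stability of Hausdorff dimension to reduce to a single $D_i$, and then invoke the fact that smooth maps do not raise Hausdorff dimension. The paper simply cites that last fact as standard; you attempt to derive it from the Lipschitz case, and it is in that extra step that a genuine problem appears. The sets $K_n := D_i \cap \overline{B}(0,n)$ need \emph{not} be compact: Lemma~\ref{Lem:decomp} only provides that each $D_i$ is a smooth submanifold of $\bR^{l+1}$, not that it is closed, so $K_n$ is a bounded but possibly non-closed subset of $D_i$. On such a set a smooth map can have unbounded derivative (think of $x \mapsto \sin(1/x)$ on the submanifold $(0,1) \subseteq \bR$, where $(0,1) \cap \overline{B}(0,1)$ is all of $(0,1)$), so the claimed Lipschitz bound on $\tau|_{K_n}$ does not follow, and this step as written fails.

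The repair is standard and preserves your architecture: instead of exhausting $D_i$ by closed balls, use that a smooth map on a submanifold is \emph{locally} Lipschitz with respect to the ambient Euclidean metric (each point of $D_i$ has a relatively open neighborhood, e.g.\ one whose closure is a compact subset of a single chart, on which $\tau$ is Lipschitz). Since $D_i$ is second countable, it is a countable union of such neighborhoods, and your second application of countable stability then goes through verbatim with these neighborhoods in place of the $K_n$. With that substitution your argument is correct and amounts to a proof of the ``smooth maps do not raise Hausdorff dimension'' fact that the paper takes for granted.
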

Lemma~\ref{Lem:raise} is a straightforward consequence of Lemma~\ref{Lem:decomp} and a few standard facts about Hausdorff dimension which can be found in \cite{mattila} or other places.
We let $\dim$ be the Hausdorff dimension.
\begin{proof}
Let $D_1,\ldots, D_m$ be as in the statement of Lemma~\ref{Lem:decomp}.
Then:
$$ \dim(A) = \max\{ \hdim (D_i \cap A) : 1 \leq i \leq m \}$$
and 
$$ \hdim( \tau(A)) = \max\{ \hdim(\tau(D_i \cap A)) : 1 \leq i \leq m \}. $$
Smooth maps do not raise Hausdorff dimension, therefore:
$$ \hdim(\tau(D_i \cap A)) \leq \hdim(D_i \cap A) \quad \text{for all } 1 \leq i \leq m. $$
\end{proof}

\section{Proof}
In this section we prove Theorem~\ref{Thm:it}.
We let $(X,d)$ be a metric space with positive Hausdorff dimension and $0 < r< 1$.
Let $D$ and $\tau$ be as in the previous section and let $\hdim$ be the Hausdorff dimension.
We suppose toward a contradiction that $\iota: X \rightarrow \bR^l$ gives an isometric embedding of $(X,d^r)$ into euclidean space.
We may suppose that $\iota(X)$ contains $l+1$ points $y_1,\ldots, y_{l + 1}$ in general position.
If this is not the case then $\iota(X)$ is contained in a hyperplane with positive codimension, and we replace $\iota$ with an isometric embedding into a euclidean space with smaller dimension.
We let $x_1,\ldots, x_{l+1} \in X$ be such that
$$ \iota(x_i) = y_i \quad \text{for all } 1 \leq i \leq l+1. $$
It follows from Lemma~\ref{Lem:general}
that for all $x \in X$, $\iota(x)$ is the unique $y \in \bR^l$ such that
$$ \| y_i - y \| = d(x_i, x)^r \quad \text{for all } 1 \leq i \leq l+1. $$
Let $X' = X \setminus \{x_1,\ldots, x_{l+1}\}$.
Let $U$ be the open subset of $\bR^{l+1}$ consisting of elements with positive coordinates.
Let $f: X' \rightarrow U$ be given by
$$ f(x) = ( d(x_1,x),\ldots, d(x_{l+1},x))$$
and $g: U \rightarrow U$ be given by
$$ g(t_1,\ldots, t_{l+1}) = (t_1^r, \ldots, t_{l+1}^r) $$
Note that $g \circ f$ maps $X'$ into $D$.
The restriction of $\iota$ to $X'$ can be factored as the composition
$$ X' \stackrel{f}{\longrightarrow} U \stackrel{g}{\longrightarrow} U \stackrel{\tau}{\longrightarrow} \bR^l. $$
As $f$ gives a lipschitz map $(X',d) \rightarrow U$ we have $\hdim f(X') \leq \hdim(X',d)$.
As $g$ is smooth it does not raise Hausdorff dimension so $\hdim (g \circ f)(X') \leq \hdim(X',d)$ as well.
It follows from Lemma~\ref{Lem:raise} that $\hdim (\tau \circ g \circ f)(X') \leq \hdim(X',d)$.
Therefore $\hdim \iota(X') \leq \hdim(X',d)$.
As $X \setminus X'$ is finite we have $ \hdim \iota(X) \leq \hdim(X,d)$.
As $\iota(X)$ is isometric to $(X,d^r)$ this implies that $\hdim(X,d^r) \leq \hdim(X,d)$.
However, it follows immediately from the definition of Hausdorff dimension that $\hdim(X,d^r) = \frac{1}{r}\hdim(X,d)$.
This yields a contradiction as $\frac{1}{r} > 1$ and $\hdim(X,d) > 0$.

\bibliographystyle{alpha}
\bibliography{franzi}
\end{document}